\newtheorem{theorem}{Theorem}
\newtheorem{lemma}[theorem]{Lemma}
\newtheorem{definition}[theorem]{Definition}
\numberwithin{equation}{section}
\title{Ducci on $\mathbb{Z}_m^n$ and the Maximum Length for $n$ Odd}
\author{Mark L. Lewis}
\address{Department of Mathematical Sciences\\
Kent State University\\
Kent, OH 44242}
\email{lewis@math.kent.edu}
\author{Shannon M. Tefft}
\address{Department of Mathematical Sciences\\
Kent State University\\
Kent, OH 44242}
\email{stefft@kent.edu}
\date{August 2024}
\subjclass{20D60, 11B83, 11B50}
\keywords{Ducci sequence, modular arithmetic, length, period, $n$-Number Game}
\begin{document}
\begin{abstract}
Define the Ducci function $D: \mathbb{Z}_m^n \to \mathbb{Z}_m^n$ so 
\[D(x_1,x_2, ...,x_n)=(x_1+x_2 \;\text{mod} \; m, x_2+x_3 \; \text{mod} \; m, ..., x_n+x_1 \; \text{mod} \; m).\]
 Call $\{D^{\alpha}(\mathbf{u})\}_{\alpha=0}^{\infty}$ the Ducci sequence of $\mathbf{u}$. Because $\mathbb{Z}_m^n$ is finite, every Ducci sequence will enter a cycle. In this paper, we will prove that if $n$ is odd and $m=2^lm_1$ where $m_1$ is odd, then the longest it will take for a Ducci sequence to enter its cycle is $l$ iterations. Furthermore, we will prove the set of all tuples in a cycle for $\mathbb{Z}_m^n$ is $\{(x_1, x_2, ..., x_n) \in \mathbb{Z}_m^n \; \mid \; x_1+x_2+ \cdots +x_n \equiv 0 \; \text{mod} \; 2^l\}$.
\end{abstract}

\maketitle

\section{Introduction}\label{Intro}
 Define a function $D: \mathbb{Z}_m^n \to \mathbb{Z}_m^n$, as
\[D(x_1, x_2, ..., x_n)=(x_1+x_2 \; \text{mod} \; m, x_2+x_3 \; \text{mod} \; m, ..., x_n+x_1 \; \text{mod} \; m).\]
Like in \cite{Breuer1, Ehrlich, Glaser}, we call $D$ the Ducci function and the sequence $\{D^{\alpha}(\mathbf{u})\}_{\alpha=0}^{\infty}$ the \textbf{Ducci sequence of} $\mathbf{u}$ for $\mathbf{u} \in \mathbb{Z}_m^n$.

\indent To illustrate what a Ducci sequence looks like, we can look at $(3,0,3) \in \mathbb{Z}_4^3$ and the first eight terms in its Ducci sequence: $(3,0,3),(3,3,2),$ $(2,1,1),$ $(3,2,3),$ $(1,1,2),$  $(2,3,3),(1,2,1),(3,3,2)$. If we continue listing terms of the sequence, we then cycle through the tuples $(3,3,2),$ $(2,1,1),(3,2,3),$ $(1,1,2),$ $(2,3,3),$ $(1,2,1)$. These repeating tuples are the Ducci cycle of $(3,0,3)$. To give a more formal definition, 

\begin{definition}
The \textbf{Ducci cycle of} $\mathbf{u}$ is  
\[\{\mathbf{v} \mid \exists \alpha \in \mathbb{Z}^+ \cup \{0\}, \beta \in \mathbb{Z}^+  \ni \mathbf{v}=D^{\alpha+\beta}(\mathbf{u})=D^{\alpha}(\mathbf{u})\}\].
The \textbf{length of} $\mathbf{u}$, $\mathbf{Len(u)}$, is the smallest $\alpha$ satisfying the equation 
\[\mathbf{v}=D^{\alpha+\beta}(\mathbf{u})=D^{\alpha}(\mathbf{u})\]
 for some $v \in \mathbb{Z}_m^n$ \and the \textbf{period of} $\mathbf{u}$, $\mathbf{Per(u)}$, is the smallest $\beta$ that satisfies this equation. 
\end{definition}

If for a tuple $\mathbf{v} \in \mathbb{Z}_m^n$, there exists $\mathbf{u} \in \mathbb{Z}_m^n$ such that $\mathbf{v}$ is in the Ducci cycle of $\mathbf{u}$, we may also say that $\mathbf{v}$ is in a Ducci cycle. Note that because $|\mathbb{Z}_m^n| < \infty$, every Ducci sequence must enter a cycle, which is also stated on page 6000 of \cite{Breuer1}, as well as \cite{Breuer2, Dular}.

\indent When it comes to studying Ducci sequences, $(0,0,...,0,1) \in \mathbb{Z}_m^n$, and namely its Ducci sequence are significant. The Ducci sequence of $(0,0,...,0,1) \in \mathbb{Z}_m^n$ is known as the \textbf{basic Ducci sequence} of $\mathbb{Z}_m^n$.
This term is first used by \cite{Ehrlich} on page 302, and is also used in  \cite{Breuer1, Glaser, Paper1}.
We say 
\[L_m(n)=Len(0,0,...,0,1)\] and \[P_m(n)=Per(0,0,...,0,1)\]
 like in \cite{Paper1} and similar to how it is defined in \cite{Breuer1, Dular,Ehrlich}. These values, $L_m(n)$ and $P_m(n)$ are important because for $\mathbf{u} \in \mathbb{Z}_m^n$, $Len(\mathbf{u}) \leq L_m(n)$ and $Per(\mathbf{u})|P_m(n)$, as proved by \cite{Breuer1} in Lemma 1. 
Therefore, this finding provides a maximum value for the length and period of any tuple in $\mathbb{Z}_m^n$. The notation $P_m(n)$ is also used to represent the maximal period for a Ducci sequence on $\mathbb{Z}_m^n$ on page 858 of \cite{Breuer2}. 

\indent The goal of this paper is to establish a value for $L_m(n)$ when $n$ is odd. Specifically,
\begin{theorem}\label{n_odd_length}
    Let $n$ be odd and let $m=2^lm_1$ where $m_1$ is odd, $l\geq 0$. Then $L_m(n)=l$.
\end{theorem}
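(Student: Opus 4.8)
The plan is to separate the odd and even parts of the modulus and then to track the single linear invariant given by the coordinate sum. Writing $S$ for the cyclic shift $(x_1,\dots,x_n)\mapsto(x_2,\dots,x_n,x_1)$, observe that $D=I+S$ is a $\mathbb{Z}$-linear map, so it respects the Chinese Remainder decomposition $\mathbb{Z}_m^n\cong\mathbb{Z}_{2^l}^n\times\mathbb{Z}_{m_1}^n$ coordinatewise. A point of such a product system is periodic exactly when each of its two projections is periodic, and its length is the maximum of the lengths of the projections; since $(0,\dots,0,1)$ projects to the basic vector on each factor, it suffices to prove $L_{m_1}(n)=0$ and $L_{2^l}(n)=l$ and take the maximum.

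The odd factor is immediate. Computing the determinant of $I+S$ over $\mathbb{Z}$ as $\prod_{k=0}^{n-1}(1+\zeta^k)=2$ for $n$ odd, where $\zeta=e^{2\pi i/n}$, we find $\det D=2$, which is a unit modulo the odd number $m_1$. Hence $D$ is bijective on $\mathbb{Z}_{m_1}^n$, every tuple is periodic, and $L_{m_1}(n)=0$. To handle the lower bound for the even factor I introduce $\sigma(\mathbf x)=x_1+\cdots+x_n$ and use the elementary identity $\sigma(D\mathbf x)=2\sigma(\mathbf x)$, so that $\sigma(D^{\alpha}\mathbf x)=2^{\alpha}\sigma(\mathbf x)$. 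Any periodic tuple $\mathbf v$ lies in $\mathrm{Im}(D^k)$ for every $k$, whence $\sigma(\mathbf v)=2^k\sigma(\mathbf w)\equiv0\pmod{2^l}$ once $k\ge l$. Since $\sigma\big(D^{l-1}(0,\dots,0,1)\big)=2^{l-1}\not\equiv0\pmod{2^l}$, the $(l-1)$st term of the basic sequence is not yet periodic, giving $L_m(n)\ge l$.

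For the upper bound I would analyze $D=I+S$ on $\mathbb{Z}_{2^l}^n$ viewed as a module over $R=\mathbb{Z}_{2^l}[x]/(x^n-1)$, with $x$ acting by $S$. Using the factorization $x^n-1=(x-1)h(x)$ with $h(x)=1+x+\cdots+x^{n-1}$ and $h(1)=n$ a unit modulo $2^l$, the factors $(x-1)$ and $h(x)$ are coprime, so $R$, and hence $\mathbb{Z}_{2^l}^n$, splits as $\langle\mathbf 1\rangle\oplus W$ with $\mathbf 1=(1,\dots,1)$. On $\langle\mathbf 1\rangle$ the map $D$ acts as multiplication by $2$, nilpotent of index exactly $l$, while on $W$ one checks $\det(D|_W)=\prod_{\zeta\ne1}(1+\zeta)=1$, so $D|_W$ is an automorphism and $W$ is precisely the set of periodic tuples. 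Because $D^l$ annihilates $\langle\mathbf 1\rangle$, every $D^l\mathbf x$ lands in $W$ and is periodic, giving $L_{2^l}(n)\le l$; identifying $W=\{\mathbf x:\sigma(\mathbf x)\equiv0\bmod 2^l\}$ simultaneously recovers the cycle set stated in the abstract. Combining the two factors yields $L_m(n)=\max(l,0)=l$.

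The main obstacle is the Fitting-type splitting $\mathbb{Z}_{2^l}^n=\langle\mathbf 1\rangle\oplus W$ over the non-field ring $\mathbb{Z}_{2^l}$: establishing the coprimeness of $(x-1)$ and $h(x)$, verifying that $D$ is nilpotent of index exactly $l$ on the first summand, and confirming that $\det(D|_W)$ is a unit are the delicate points. All three depend essentially on $n$ being odd, which is what makes $n=h(1)$ invertible and forces $\det D=2$; the even case would require a genuinely different treatment.
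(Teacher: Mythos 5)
Your proof is correct, but it takes a genuinely different route from the paper's. The paper never leaves the coordinate description: it computes the coefficients $a_{d,s}$ of $D^d$ modulo $m$ for $d$ a multiple of the period (Lemma \ref{coefficient_at_period}, proved by an explicit induction on $l$), pairs this with the fact that predecessors come in pairs differing by $\frac{m}{2}(1,\dots,1)$ (Theorem \ref{n odd m even pred}), and then shows by direct calculation that $2^{l-1}m_1\mathbf{1}+D^{l-1}(0,\dots,0,1)$ is fixed by $D^d$, which forces $D^{l}(0,\dots,0,1)$ into $K(\mathbb{Z}_m^n)$ and $D^{l-1}(0,\dots,0,1)$ out of it. You replace all of that with two pieces of structure: the CRT reduction to the factors $\mathbb{Z}_{m_1}^n$ and $\mathbb{Z}_{2^l}^n$, and the splitting of $\mathbb{Z}_{2^l}[x]/(x^n-1)$ as $\langle\mathbf{1}\rangle\oplus W$ coming from the comaximality of $(x-1)$ and $h(x)=1+x+\cdots+x^{n-1}$. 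The points you flag as delicate all go through: $h(x)=n+(x-1)q(x)$ with $n$ a unit mod $2^l$ gives the coprimality; $D$ acts on $\langle\mathbf{1}\rangle\cong\mathbb{Z}_{2^l}$ as multiplication by $2$, nilpotent of index exactly $l$; and $\det(D|_W)=\pm h(-1)=\pm1$ because $n$ is odd, so $D|_W$ is an automorphism and $W$ is exactly the set of periodic tuples. Your lower bound via $\sigma(D\mathbf{x})=2\sigma(\mathbf{x})$ is precisely the paper's Lemma \ref{coefficients_sum_to_power_of_2} in invariant form. What each approach buys: yours is shorter, isolates exactly where oddness of $n$ enters (invertibility of $h(1)=n$ and $h(-1)=1$), and yields Theorem \ref{n_odd_Cycle_subgroup} as an immediate by-product of $K(\mathbb{Z}_{2^l}^n)=W=\{\mathbf{x}:\sigma(\mathbf{x})\equiv 0\bmod 2^l\}$, whereas the paper obtains that theorem by a separate counting argument; the paper's computation, in exchange, is elementary and self-contained and produces the explicit congruences for $a_{d,s}$ on the cycle, which have independent use. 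Two small things to spell out if you write this up: justify that the length of a tuple in a product system is the maximum of the lengths of its projections (take $\beta$ to be the $\operatorname{lcm}$ of the two periods), and note that when $l=0$ the even factor is absent, so $L_m(n)=0$ comes from the odd factor alone.
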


\indent Let $K(\mathbb{Z}_m^n)=\{\mathbf{u} \in \mathbb{Z}_m^n \mid \mathbf{u} \; \textit{is in the Ducci cycle for some tuple} \; \mathbf{v} \in \mathbb{Z}_m^n\}$, which is first defined in Definition 4 of \cite{Breuer1}. It is also stated in \cite{Breuer1} that $K(\mathbb{Z}_m^n)$ is a subgroup of $\mathbb{Z}_m^n$ and is Theorem 1 of \cite{Paper1}, where a proof is given.
We plan to use Theorem \ref{n_odd_length} to prove the following theorem about $K(\mathbb{Z}_m^n)$:
\begin{theorem}\label{n_odd_Cycle_subgroup}
     Let $n$ be odd and $m=2^lm_1$ where $m_1$ is odd, $l\geq 1$. Then 
     \[K(\mathbb{Z}_m^n)=\{(x_1, x_2, ..., x_n) \in \mathbb{Z}_m^n \; \mid \; x_1+x_2+ \cdots +x_n \equiv 0 \; \text{mod} \; 2^l\}\].
\end{theorem}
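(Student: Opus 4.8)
The plan is to identify $K(\mathbb{Z}_m^n)$ with the image of $D^l$ and then compute that image through a ring-theoretic model of the Ducci map. First I would observe that a tuple lies in a Ducci cycle precisely when it is periodic: if $\mathbf{u}=D^{\alpha}(\mathbf{v})=D^{\alpha+\beta}(\mathbf{v})$ then $D^{\beta}(\mathbf{u})=\mathbf{u}$, and conversely a periodic tuple lies in its own cycle. Since $D$ is a group endomorphism of $\mathbb{Z}_m^n$, the subgroups $D^{\alpha}(\mathbb{Z}_m^n)$ decrease and stabilize at the set of periodic points. Theorem \ref{n_odd_length} gives $Len(\mathbf{u})\leq L_m(n)=l$ for every $\mathbf{u}$, so $D^l(\mathbf{u})$ is already periodic; conversely any periodic tuple equals $D^l$ of an earlier point on its cycle. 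Hence $K(\mathbb{Z}_m^n)=D^l(\mathbb{Z}_m^n)$.

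One inclusion then follows from the sum homomorphism. Let $\sigma(\mathbf{x})=x_1+\cdots+x_n$, viewed as a homomorphism $\mathbb{Z}_m^n\to\mathbb{Z}_m$. Telescoping gives $\sigma(D(\mathbf{x}))=2\sigma(\mathbf{x})$, so $\sigma(D^l(\mathbf{x}))=2^l\sigma(\mathbf{x})$. Because $2^l\mid m$, reducing modulo $2^l$ yields $\sigma(D^l(\mathbf{x}))\equiv 0\pmod{2^l}$, and therefore $K(\mathbb{Z}_m^n)=D^l(\mathbb{Z}_m^n)\subseteq S$, where $S=\{(x_1,\dots,x_n)\mid x_1+\cdots+x_n\equiv 0 \bmod 2^l\}$ is the claimed subgroup.

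For the reverse inclusion I would use the Chinese Remainder Theorem to reduce to a prime power. Since $\gcd(2^l,m_1)=1$, we have $\mathbb{Z}_m^n\cong\mathbb{Z}_{2^l}^n\times\mathbb{Z}_{m_1}^n$ compatibly with both $D$ and $\sigma$, so $K(\mathbb{Z}_m^n)\cong K(\mathbb{Z}_{2^l}^n)\times K(\mathbb{Z}_{m_1}^n)$. On $\mathbb{Z}_{m_1}^n$, Theorem \ref{n_odd_length} gives $L_{m_1}(n)=0$, so $D$ is a bijection there and $K(\mathbb{Z}_{m_1}^n)=\mathbb{Z}_{m_1}^n$ imposes no condition; moreover the condition defining $S$ is vacuous on the $\mathbb{Z}_{m_1}$ part. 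Thus it suffices to prove $K(\mathbb{Z}_{2^l}^n)=\ker\sigma=\{\mathbf{x}\in\mathbb{Z}_{2^l}^n\mid \sum x_i\equiv 0\pmod{2^l}\}$.

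For this prime-power case I would model $\mathbb{Z}_{2^l}^n$ as $R=\mathbb{Z}_{2^l}[x]/(x^n-1)$, under which $D$ becomes multiplication by $1+x$ (up to the unit $x$) and $\sigma$ becomes evaluation at $x=1$. Because $n$ is odd, $x^n-1=(x-1)g(x)$ with $g(1)=n$ a unit modulo $2^l$, so $x-1$ and $g(x)$ are coprime and $R\cong\mathbb{Z}_{2^l}\times R'$ with $R'=\mathbb{Z}_{2^l}[x]/(g(x))$, the first projection being $\sigma$. On the first factor $1+x\mapsto 2$, whereas on $R'$ the element $1+x$ is a unit, since modulo $2$ we have $\overline{g}(1)=n\equiv 1\neq 0$, so $x-1=x+1$ does not divide $\overline{g}$ and $1+x$ is invertible in $R'/2R'$, hence in $R'$. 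Therefore $(1+x)^l$ annihilates the first factor and acts invertibly on $R'$, giving $K(\mathbb{Z}_{2^l}^n)=(1+x)^lR=\{0\}\times R'=\ker\sigma$. The main obstacle is precisely this last step: establishing that $1+x$ acts as a unit on the complementary factor $R'$ (equivalently, that $\ker D^l$ has order exactly $2^l$), which is what forces the single congruence modulo $2^l$ to be the only constraint cutting out $K(\mathbb{Z}_m^n)$.
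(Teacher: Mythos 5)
Your proposal is correct, but it reaches the theorem by a genuinely different route from the paper. The forward inclusion is essentially the same in both: the paper computes $y_1+\cdots+y_n=\bigl(\sum_i a_{l,i}\bigr)\bigl(\sum_i x_i\bigr)=2^l(x_1+\cdots+x_n)$ via Lemma \ref{coefficients_sum_to_power_of_2}, which is exactly your identity $\sigma\circ D^l=2^l\sigma$. The divergence is in the reverse inclusion. The paper stays combinatorial: it invokes Theorem \ref{n odd m even pred}, Theorem \ref{whenhaspred}, and Lemma \ref{AlltheWayOut} to describe the transition graph (each cycle element carries a tree of exactly $2^l$ tuples), deduces $|K(\mathbb{Z}_m^n)|=m^n/2^l$, computes that the candidate subgroup has the same order, and finishes by cardinality. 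You instead identify $K(\mathbb{Z}_m^n)$ with $\operatorname{im}D^l$ (legitimate: the eventual image of a self-map of a finite set is its set of periodic points, and Theorem \ref{n_odd_length} says the images stabilize by step $l$), split off the odd part by CRT, and realize $\mathbb{Z}_{2^l}^n$ as $\mathbb{Z}_{2^l}[x]/(x^n-1)\cong\mathbb{Z}_{2^l}\times R'$, on which $1+x$ acts as $2$ and as a unit respectively; the coprimality of $x-1$ with $(x^n-1)/(x-1)$ uses the hypothesis that $n$ is odd exactly where the paper does, and the lift of invertibility from $R'/2R'$ to $R'$ is valid because $2$ is nilpotent there. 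Your route buys independence from the predecessor-counting machinery, exhibits $K(\mathbb{Z}_m^n)$ as a ring-theoretic direct factor rather than merely a subgroup of the right size, and as a byproduct re-derives $L_m(n)=l$, since the chain of images visibly stabilizes at exactly the $l$-th step; the paper's route buys elementarity, reusing only facts about predecessors already established for other purposes.
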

\indent The work in this paper was done while the second author was a Ph.D. student at Kent State University under the advisement of the first author and will appear as part of the second author's dissertation. The authors have no competing interests.

\section{Background}\label{Background}
\indent The Ducci function has been defined in a few different ways in the literature. For example, \cite{Ehrlich, Freedman, Glaser, Furno} define the Ducci function $\bar{D}$ to be an endomorphism of $(\mathbb{Z}^+ \cup \{0\})^n$, \cite{Breuer1} defines it as an endomorphism on $\mathbb{Z}^n$, and \cite{Brown, Chamberland, Schinzel} define it as an endomorphism on $\mathbb{R}^n$, where $\bar{D}(x_1, x_2, ..., x_n)=(|x_1-x_2|, |x_2-x_3|, ..., |x_n-x_1|)$ for all three of these cases. If $\mathbf{u} \in \mathbb{Z}^n$, then $\bar{D}(\mathbf{u}) \in (\mathbb{Z}^+ \cup \{0\})^n$. Because of this, for the sake of simplicity, we will refer to the Ducci cases on $\mathbb{Z}^n$ and on $(\mathbb{Z}^+ \cup \{0\})^n$ as the Ducci case on $\mathbb{Z}^n$. As expected, Ducci on $\mathbb{Z}^n$ and Ducci on $\mathbb{R}^n$ need to be considered separately, and findings on one case may not apply to the other.
 
\indent In the typical Ducci case on $\mathbb{Z}^n$, \cite{Ehrlich, Glaser, Furno} discuss how all Ducci sequences enter a Ducci cycle and \cite{Schinzel} proves this in Theorem 2 for Ducci on $\mathbb{R}^n$. In Lemma 3, \cite{Furno} proves that if you examine a tuple in a Ducci cycle for Ducci on $\mathbb{Z}^n$, then all of the coordinates of the tuple belong to $\{0,c\}$ for some $c \in \mathbb{Z}^+$. As a result of this, the Ducci case on $\mathbb{Z}^n$ often focuses on the applications of Ducci on $\mathbb{Z}_2$ and how it applies to Ducci on $\mathbb{Z}^n$, as pointed out in \cite{Breuer1, Ehrlich, Glaser, Furno}. Theorem 1 of \cite{Schinzel} proves a similar result for Ducci on $\mathbb{R}^n$: all of the coordinates of a tuple in the sequence belong to $\{0,c\}$ for some $c \in \mathbb{R}$ after the sequence reaches a limit point. 

\indent There have been a few papers that examine the maximum value for the length on the general Ducci case through the Ducci case on $\mathbb{Z}_2^n$. For the $n$ odd case, \cite{Ehrlich} shows on page 303 that $L_2(n)=1$, which supports Theorem \ref{n_odd_length}. For the case where $n$ is even, Theorem 6 of \cite{Glaser} shows if $n=2^{k_1}+2^{k_2}$ where $k_1>k_2 \geq 0$, then $L_2(n)=2^{k_2}$. This is more generally approached by \cite{Breuernote} in Theorem 4, who proves $L_2(n)=2^k$ when $n=2^kn_1$ for $n_1$ odd and $k \geq 1$. 

\indent We focus our attention again on the Ducci function $D$ defined on $\mathbb{Z}_m^n$, which is originally explored in \cite{Wong} and also by \cite{Breuer1, Breuer2, Dular, Paper1}.
$D$ is an endomorphism on $\mathbb{Z}_m^n$, which is a result of how it is defined in Definition 1 of \cite{Breuer1} and is proved on page 4 of \cite{Paper1}. Let $H: \mathbb{Z}_m^n \to \mathbb{Z}_m^n$, such that $H(x_1, x_2, ..., x_n)=(x_2, x_3, ..., x_n, x_1)$. Then by Definition 1 in \cite{Breuer1} and \cite{Ehrlich, Glaser, Paper1}, $H \in End(\mathbb{Z}_m^n)$, $D$ and $H$ commute, and $D=I+H$ where $I$ is the identity endomorphism on $\mathbb{Z}_m^n$. Also, $\{H^{\beta}(D^{\alpha}(\mathbf{u}))\}_{\alpha=0}^{\infty}$ is the Ducci sequence for $H^{\beta}(\mathbf{u})$ and  $H^{\beta}(\mathbf{u}) \in K(\mathbb{Z}_m^n)$ if $\mathbf{u} \in K(\mathbb{Z}_m^n)$ where $0 \leq \alpha \leq n-1$, as it is shown on page 5 of \cite{Paper1}. Like in the Ducci case on $\mathbb{Z}^n$, $D(\lambda \mathbf{u})=\lambda D(\mathbf{u})$, so the Ducci sequence of $D(\lambda \mathbf{u})$ is $\{\lambda D^{\alpha}(\mathbf{u})\}_{\alpha=0}^{\infty}$.

\indent Let $\mathbf{u} \in \mathbb{Z}_m^n$. If there exists $\mathbf{v} \in \mathbb{Z}_m^n$ such that $D(\mathbf{v})=\mathbf{u}$, then $\mathbf{v}$ is known as a \textbf{predecessor} of $\mathbf{u}$. The first instance we can find this definition being used is on page 313 of \cite{Furno} and it is also used by \cite{Breuer1, Glaser}.

\indent We return to our example of $(3,0,3) \in \mathbb{Z}_4^3$. To further examine this example, we create a transition graph that maps out all of the Ducci sequences and their cycles, and then look at the connected component containing $(3,0,3)$, given in Figure \ref{Example}.

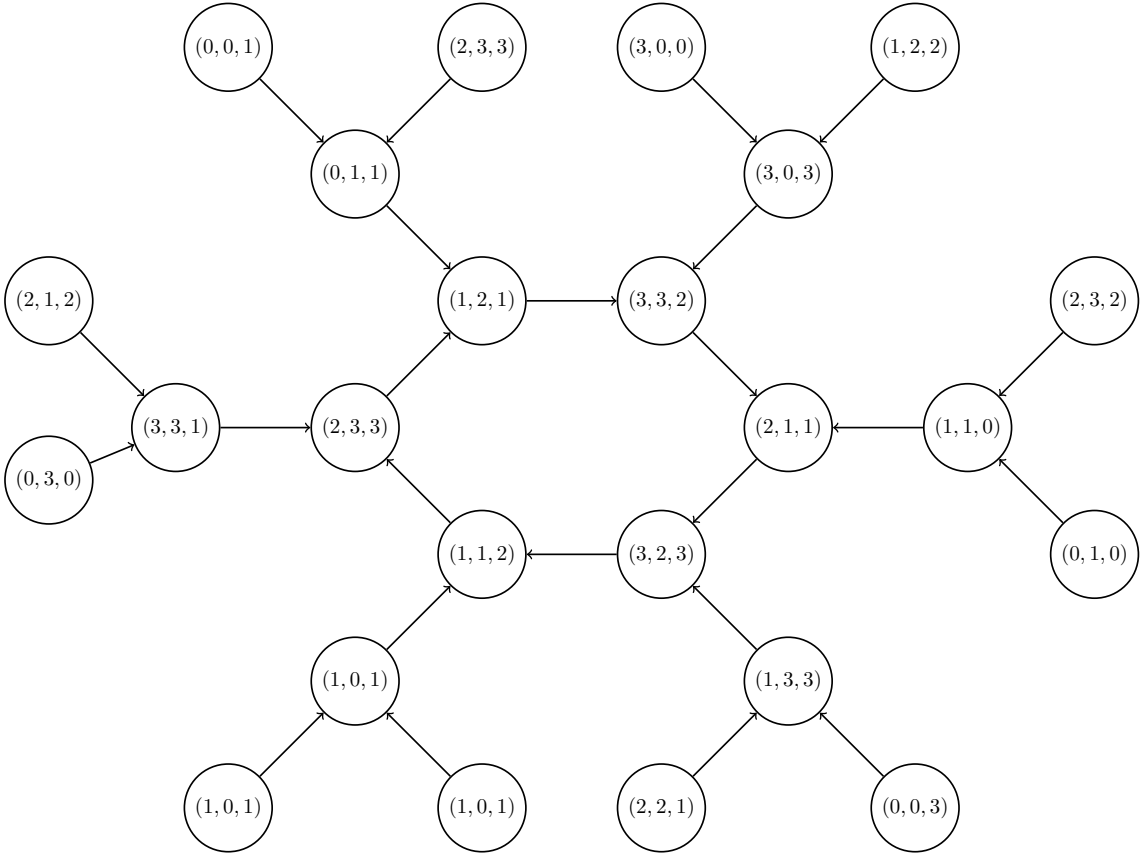
\begin{figure}
\centering
    
\begin{adjustwidth}{-25 pt}{-25 pt}
\resizebox{1.2\textwidth}{!}{
\begin{tikzpicture}[node distance={30mm}, thick, main/.style = {draw, circle}]
\node[main](1){$(1,2,1)$};
\node[main](2)[above left of=1]{$(0,1,1)$};
\node[main](3)[above right of=2]{$(2,3,3)$};
\node[main](4)[above left of=2]{$(0,0,1)$};
\node[main](5)[right of=1]{$(3,3,2)$};
\node[main](6)[above right of=5]{$(3,0,3)$};
\node[main](7)[above left of=6]{$(3,0,0)$};
\node[main](8)[above right of=6]{$(1,2,2)$};
\node[main](9)[below right of=5]{$(2,1,1)$};
\node[main](10)[right of=9]{$(1,1,0)$};
\node[main](11)[above right of=10]{$(2,3,2)$};
\node[main](12)[below right of=10]{$(0,1,0)$};
\node[main](13)[below left of=9]{$(3,2,3)$};
\node[main](14)[below right of=13]{$(1,3,3)$};
\node[main](15)[below left of=14]{$(2,2,1)$};
\node[main](16)[below right of =14]{$(0,0,3)$};
\node[main](17)[left of=13]{$(1,1,2)$};
\node[main](18)[below left of=17]{$(1,0,1)$};
\node[main](19)[below left of=18]{$(1,0,1)$};
\node[main](20)[below right of=18]{$(1,0,1)$};
\node[main](21)[above left of=17]{$(2,3,3)$};
\node[main](22)[left of=21]{$(3,3,1)$};
\node[main](23)[above left of=22]{$(2,1,2)$};
\node[main](24)[below of=23]{$(0,3,0)$};

\draw[->](3)--(2);
\draw[->](4)--(2);
\draw[->](2)--(1);
\draw[->](1)--(5);
\draw[->](6)--(5);
\draw[->](7)--(6);
\draw[->](8)--(6);
\draw[->](5)--(9);
\draw[->](10)--(9);
\draw[->](11)--(10);
\draw[->](12)--(10);
\draw[->](9)--(13);
\draw[->](14)--(13);
\draw[->](15)--(14);
\draw[->](16)--(14);
\draw[->](13)--(17);
\draw[->](18)--(17);
\draw[->](19)--(18);
\draw[->](20)--(18);
\draw[->](17)--(21);
\draw[->](22)--(21);
\draw[->](23)--(22);
\draw[->](24)--(22);
\draw[->](21)--(1);
    
\end{tikzpicture}
}

\end{adjustwidth}
\caption{Transition Graph for $\mathbb{Z}_4^3$}\label{Example}
\end{figure}
Note that this component also contains $(0,0,1)$, so the basic Ducci sequence is a part of this component. Using some of our definitions from Section \ref{Intro}, we can see that $Len(3,0,3)=1$ and $L_4(3)=2$. 

\indent We can also see that $(3,0,3)$ has $2$ predecessors, $(3,0,0)$ and $(1,2,2)$. In fact, all of the tuples in this connected component that have a predecessor have exactly two predecessors. 
When $n$ is odd and $m$ is even, this is always true:
\begin{theorem}\label{n odd m even pred}
For $n$ odd and $m$ even, every tuple that has a predecessor has exactly 2 predecessors. If one predecessor is $(x_1, x_2, ..., x_n)$, then the other predecessor is $(\displaystyle{\frac{m}{2}}+x_1,\displaystyle{\frac{m}{2}}+x_2,..., \frac{m}{2}+x_n)$.
\end{theorem}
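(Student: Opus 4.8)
The plan is to exploit the fact, recalled in the Background section, that $D$ is an endomorphism of $\mathbb{Z}_m^n$. For a group homomorphism, the set of predecessors (i.e. the fiber) of any element lying in the image is a coset of the kernel, and every nonempty fiber therefore has exactly $|\ker D|$ elements. Thus the entire theorem reduces to two claims: that $|\ker D| = 2$, and that the unique nonzero element of $\ker D$ is the constant tuple $(\frac{m}{2}, \frac{m}{2}, \ldots, \frac{m}{2})$. Granting these, if $(x_1, x_2, \ldots, x_n)$ is one predecessor of a tuple $\mathbf{u}$, then the whole fiber $D^{-1}(\mathbf{u})$ is the coset $\{(x_1, \ldots, x_n),\, (x_1+\tfrac{m}{2}, \ldots, x_n+\tfrac{m}{2})\}$, which is precisely the assertion of the theorem.

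First I would compute $\ker D$ directly from the definition. A tuple $\mathbf{v} = (v_1, v_2, \ldots, v_n)$ satisfies $D(\mathbf{v}) = \mathbf{0}$ exactly when $v_i + v_{i+1} \equiv 0 \pmod{m}$ for each $i$, with indices read cyclically so that the last equation is $v_n + v_1 \equiv 0 \pmod m$. Solving this recurrence downward gives $v_i \equiv (-1)^{i-1} v_1 \pmod m$ for all $i$, so that the wrap-around equation reduces to a single congruence in the one free variable $v_1$. This is where the hypotheses enter and is the only real content of the argument: since $n$ is odd, we have $v_n \equiv (-1)^{n-1} v_1 \equiv v_1$, so $v_n + v_1 \equiv 0$ collapses to $2v_1 \equiv 0 \pmod m$. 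Because $m$ is even, this has exactly the two solutions $v_1 \in \{0, \tfrac{m}{2}\}$. Taking $v_1 = 0$ yields $\mathbf{v} = \mathbf{0}$, while $v_1 = \tfrac{m}{2}$ yields $v_i \equiv (-1)^{i-1}\tfrac{m}{2} \equiv \tfrac{m}{2} \pmod m$ for every $i$ (using $-\tfrac{m}{2} \equiv \tfrac{m}{2}$), i.e. the constant tuple $(\tfrac{m}{2}, \ldots, \tfrac{m}{2})$. Hence $\ker D = \{\mathbf{0},\, (\tfrac{m}{2}, \ldots, \tfrac{m}{2})\}$ has order $2$, establishing both claims.

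Since the proof is essentially a one-step kernel computation, I do not anticipate a serious obstacle. The one point demanding care is keeping the cyclic indexing consistent so that the parity of $n$ genuinely forces $v_n \equiv v_1$ rather than $v_n \equiv -v_1$; this is exactly the step where $n$ odd is indispensable. Indeed, had $n$ been even the wrap-around equation would become the identity $-v_1 + v_1 \equiv 0$, placing no constraint on $v_1$ and producing a kernel of order $m$ instead of $2$, which explains why the clean ``exactly two predecessors'' dichotomy is special to $n$ odd.
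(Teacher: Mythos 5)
Your proof is correct, and it takes a cleaner, more structural route than the paper's. The paper argues directly with two arbitrary predecessors $(x_1,\ldots,x_n)$ and $(y_1,\ldots,y_n)$ of the same tuple: it lifts the congruences $x_i+x_{i+1}\equiv y_i+y_{i+1} \pmod m$ to integer equations with carry terms $z_i\in\{-1,0,1\}$, telescopes through an alternating sum to reach $2x_1=2y_1+(z_1-z_2+\cdots+z_n)m$, and then splits into cases on the parity of $z_1-z_2+\cdots+z_n$. You instead invoke the fact (already recalled in the paper's Background section) that $D$ is an endomorphism of $\mathbb{Z}_m^n$, so every nonempty fiber is a coset of $\ker D$, and you compute the kernel once via the recurrence $v_{i+1}\equiv -v_i$, with $n$ odd collapsing the wrap-around condition to $2v_1\equiv 0 \pmod m$ and $m$ even giving exactly the two solutions $v_1\in\{0,\tfrac{m}{2}\}$. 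The underlying parity mechanism is the same in both arguments --- the alternating telescoping in the paper is exactly your $v_i\equiv(-1)^{i-1}v_1$ in disguise --- but your version eliminates the carry bookkeeping entirely and makes transparent why the count is exactly $2$ (it is $\lvert\ker D\rvert$) rather than establishing ``at most 2'' and ``at least 2'' separately. Your closing remark about the $n$ even case (kernel of order $m$) is also correct and explains why the theorem's hypothesis is essential. The one small thing worth stating explicitly if you write this up is the standard fact that nonempty fibers of a group homomorphism are kernel cosets, since the paper never phrases anything in those terms.
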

\begin{proof}
Let $n$ be odd and $m$ be even. Notice
\[D(x_1, x_2, ..., x_n)=(x_1+x_2,x_2+x_3,...,x_n+x_1)\]
and
\[D(\frac{m}{2}+x_1,\frac{m}{2}+x_2,..., \frac{m}{2}+x_n)=(x_1+x_2,x_2+x_3,...,x_n+x_1.)\]
So if $(x_1, x_2, ..., x_n)$ is a predecessor to a tuple, $(\displaystyle{\frac{m}{2}+x_1,\frac{m}{2}+x_2,..., \frac{m}{2}+x_n})$ is a predecessor to that same tuple. Next we prove that if a tuple has a predecessor, it has exactly 2 predecessors.

\indent Suppose $\mathbf{u}$ has 2 predecessors $(x_1, x_2, ..., x_n)$ and $(y_1,y_2,...,y_n)$. Then we have $x_1+x_2 \equiv y_1+y_2 \; \text{mod} \; m,\; x_2+x_3 \equiv y_2+y_3 \; \text{mod} \; m \;, ...,  x_1+x_n \equiv y_1+y_n \; \text{mod} \; m$. Since the $x_i$ and $y_j$ are each at most $m-1$ for every $1 \leq i,j \leq n$, we obtain 
\[x_1+x_2=y_1+y_2+ z_1m\]
\[x_2+x_3=y_2+y_1+ z_2m\]
\[\vdots\]
\[x_1+x_n=y_1+y_n+ z_nm,\]
where each of the $z_i \in \{-1,0,1\}$ for $1 \leq i \leq n$. Subtracting the second equation from the first yields $x_1-x_3=y_1-y_3+(z_1-z_2)m$. Adding this to the third equation and continuing this pattern, we have 
\[x_1+x_4=y_1+y_4+(z_1-z_2+z_3)m\]
\[\vdots\]
\[x_1-x_n=y_1-y_n+(z_1-z_2+ \cdots -z_{n-1})m.\]
Now if we add this to the equation $ x_1+x_n= y_1+y_n + z_nm$, this produces
\[2x_1=2y_1+(z_1-z_2+ \cdots +z_n)m.\]
We therefore have 2 possible cases:
\begin{itemize}
    \item Case 1: $z_1-z_2+ \cdots +z_n$ is even\\
    Here $x_1=y_1+\delta m$ where $\delta \in \mathbb{Z}$ and therefore $x_1=y_1$.\\
    \item Case 2: $z_1-z_2+ \cdots +z_n$ is odd\\
    Here, $x_1=y_1+\gamma m+\displaystyle{\frac{m}{2}}$ where $\gamma \in \mathbb{Z}$. Therefore, $x_1=y_1+\displaystyle{\frac{m}{2}}$, which is the previously discussed case of this theorem.\\
 
\end{itemize}
\indent If $x_1=y_1$, then 
\[x_2=y_2\] \[x_3=y_3\] \[\vdots\] \[x_n=y_n\].
 If $x_1=y_1+\displaystyle{\frac{m}{2}}$, then
  \[x_2=y_2+\frac{m}{2}\] \[x_3+ \frac{m}{2}=y_3\]\[\vdots\] \[x_n=y_n+\frac{m}{2}\].
\indent The Theorem follows from here.
\end{proof}

\indent Something else to note about the connected component containing $(3,0,3)$ in Figure \ref{Example} is that for every tuple in the cycle, the structure of the branches coming off of these tuples are the same: they each have one predecessor outside of the cycle, and that particular tuple outside the cycle has two predecessors of its own. Part of this is a result of every tuple having a predecessor in $\mathbb{Z}_m^n$ having exactly two predecessors, but if $\mathbf{u}$ is in the cycle of this connected component, then there exists $\mathbf{v} \not \in K(\mathbb{Z}_4^3)$ in the component such that $D^2(\mathbf{v})=\mathbf{u}$ and $D(\mathbf{v}) \not \in K(\mathbb{Z}_m^n)$. Notice that all of these tuples $\mathbf{v}=(x_1, x_2, ..., x_n)$ in this situation satisfy $x_1+x_2+ \cdots+x_n$ is odd. For the case where $n$ is odd and $m$ is even, we will prove the following in Section \ref{MainFinding}:
\begin{lemma}\label{AlltheWayOut}
   Let $n$ be odd and $m=2^lm_1$ with $m_1$ odd and $l \geq 1$. Let $(x_1, x_2, ..., x_n)$ be in $\mathbb{Z}_m^n$ such that $x_1+x_2+ \cdots+ x_n$ is odd. Then $Len(x_1, x_2, ..., x_n)=l$. 
\end{lemma}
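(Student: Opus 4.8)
The plan is to control everything through a single scalar invariant: the coordinate sum $S(\mathbf{u}) = x_1 + x_2 + \cdots + x_n$, read modulo $m$ (and hence modulo $2^l$, since $2^l \mid m$). The one computation I need at the outset is that each coordinate $x_i$ occurs in exactly two slots of $D(\mathbf{u})$, so
\[
S(D(\mathbf{u})) \equiv 2\,S(\mathbf{u}) \pmod{m},
\qquad\text{hence}\qquad
S(D^{\alpha}(\mathbf{u})) \equiv 2^{\alpha} S(\mathbf{u}) \pmod{m}
\]
by induction. Because $S(\mathbf{u})$ is odd, the integer $2^{\alpha}S(\mathbf{u})$ has $2$-adic valuation exactly $\alpha$, so $S(D^{\alpha}(\mathbf{u})) \not\equiv 0 \pmod{2^l}$ precisely when $\alpha < l$.

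Next I would reformulate the length: $Len(\mathbf{u})$ is the least $\alpha$ for which $D^{\alpha}(\mathbf{u})$ is periodic, i.e. the least $\alpha$ with $D^{\alpha}(\mathbf{u}) \in K(\mathbb{Z}_m^n)$ (this index is well defined and monotone because $K(\mathbb{Z}_m^n)$ is forward invariant under $D$). For the lower bound $Len(\mathbf{u}) \geq l$, the key sub-step is to show directly that every $\mathbf{v}$ lying in a cycle satisfies $S(\mathbf{v}) \equiv 0 \pmod{2^l}$: periodicity $D^{\beta}(\mathbf{v}) = \mathbf{v}$ forces $(2^{\beta}-1)S(\mathbf{v}) \equiv 0 \pmod{m}$, and since $2^{\beta}-1$ is odd, hence a unit modulo $2^l$, we obtain $S(\mathbf{v}) \equiv 0 \pmod{2^l}$. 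Combining this with the valuation computation above, $D^{\alpha}(\mathbf{u}) \notin K(\mathbb{Z}_m^n)$ for every $\alpha < l$, so the sequence has not yet entered its cycle and $Len(\mathbf{u}) \geq l$.

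For the matching upper bound $Len(\mathbf{u}) \leq l$ I would invoke $Len(\mathbf{u}) \leq L_m(n) = l$ from Theorem \ref{n_odd_length} (equivalently, $D^{l}(\mathbf{u}) \in K(\mathbb{Z}_m^n)$ since $2^{l}S(\mathbf{u}) \equiv 0 \pmod{2^l}$ places it in the subgroup of Theorem \ref{n_odd_Cycle_subgroup}), and the two inequalities give $Len(\mathbf{u}) = l$. The one point requiring care is circularity rather than any hard calculation: the basic tuple $(0,\dots,0,1)$ has odd coordinate sum, so this lemma applied to it yields $L_m(n) \geq l$ and is really the engine supplying the lower bound in Theorem \ref{n_odd_length}. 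Consequently the upper bound $Len(\mathbf{u}) \leq l$ should be drawn from the separately established inequality $L_m(n) \leq l$ rather than from Theorem \ref{n_odd_length} itself; the lower-bound half above is fully self-contained and is where the genuine content lies.
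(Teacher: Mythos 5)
Your proof is correct, and it takes a genuinely different and more elementary route than the paper's. The paper argues by contradiction: assuming $Len(\mathbf{u})<l$, it equates the first coordinates of $D^{l-1+d}(\mathbf{u})$ and $D^{l-1}(\mathbf{u})$ for $d$ a multiple of the period, expands everything in the coefficients $a_{r,s}$, and invokes Lemma \ref{coefficient_at_period} (the description of $a_{d,s}$ modulo $m$ as $zm_1$ or $zm_1+1$) to reduce the identity to $2^{l-1}zm_1(x_1+\cdots+x_n)\equiv 0 \bmod m$, which is impossible. You bypass Lemma \ref{coefficient_at_period} and the coefficient bookkeeping entirely by tracking the single invariant $S(\mathbf{u})=x_1+\cdots+x_n$: the identity $S(D(\mathbf{u}))\equiv 2S(\mathbf{u}) \bmod m$ is the same underlying fact as Lemma \ref{coefficients_sum_to_power_of_2} (which the paper also uses, but only at the last step), and your observation that $D^{\beta}(\mathbf{v})=\mathbf{v}$ forces $(2^{\beta}-1)S(\mathbf{v})\equiv 0 \bmod 2^l$ with $2^{\beta}-1$ a unit mod $2^l$ gives the needed obstruction directly. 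Both arguments import the upper bound $Len(\mathbf{u})\le L_m(n)=l$ from Theorem \ref{n_odd_length}; your worry about circularity is well taken in principle but does not arise here, since the paper proves Theorem \ref{n_odd_length} in full (both bounds) before this lemma and without using it. A bonus of your approach is that it establishes the containment $K(\mathbb{Z}_m^n)\subseteq\{(x_1,\dots,x_n): x_1+\cdots+x_n\equiv 0 \bmod 2^l\}$ as a standalone fact, which is exactly the first half of Theorem \ref{n_odd_Cycle_subgroup}, whereas the paper rederives that containment separately via the coefficient sum.
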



 \indent A remaining question that will be relevant to us is this: are there any conditions that a tuple must meet to have a predecessor?
 The next theorem addresses this when $n$ is odd and $m$ is even:
\begin{theorem}\label{whenhaspred}
Let $n$ be odd and $m$ be even. If $(x_1, x_2, ..., x_n) \in \mathbb{Z}_m^n$ has a predecessor, then  $x_1+x_2+\cdots + x_n$ is even. Additionally, all tuples satisfying $x_1+x_2+\cdots + x_n$ even have a predecessor.
\end{theorem}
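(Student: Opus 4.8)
The plan is to translate the predecessor condition into a cyclic system of linear congruences and analyze its solvability. A tuple $(v_1, \dots, v_n)$ is a predecessor of $(x_1, \dots, x_n)$ precisely when
\[
v_1 + v_2 \equiv x_1, \quad v_2 + v_3 \equiv x_2, \quad \dots, \quad v_{n-1}+v_n \equiv x_{n-1}, \quad v_n + v_1 \equiv x_n \pmod m.
\]
Both halves of the theorem then become statements about this system: the first half is a necessary parity condition read off from it, and the second half asserts the system is solvable whenever $x_1 + \cdots + x_n$ is even.

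For the necessity direction, suppose a predecessor $(v_1,\dots,v_n)$ exists. Summing all $n$ equations, each $v_i$ appears in exactly two of them, so
\[
x_1 + x_2 + \cdots + x_n \equiv 2(v_1 + v_2 + \cdots + v_n) \pmod m.
\]
Lifting to the integers, $x_1 + \cdots + x_n = 2(v_1 + \cdots + v_n) + tm$ for some $t \in \mathbb{Z}$, and since $m$ is even both terms on the right are even. Hence $x_1 + \cdots + x_n$ is even, which is the first claim.

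For sufficiency I would solve the system forward. Fixing $v_1$, equations $1$ through $n-1$ force
\[
v_{k+1} = x_k - x_{k-1} + \cdots + (-1)^{k-1} x_1 + (-1)^k v_1 ,
\]
so every coordinate is determined by $v_1$ and the only remaining constraint is the wrap-around equation $v_n + v_1 \equiv x_n$. Because $n$ is odd, substituting the expression for $v_n$ collapses this to a single congruence of the form $2 v_1 \equiv C \pmod m$, where $C$ is an alternating sum of the $x_i$ satisfying $C \equiv x_1 + \cdots + x_n \pmod 2$. Since $m$ is even, $\gcd(2,m) = 2$, so this congruence is solvable if and only if $C$ is even, i.e. if and only if $x_1 + \cdots + x_n$ is even; and when it is solvable it has exactly two solutions, matching the two predecessors guaranteed by Theorem \ref{n odd m even pred}. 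The main thing to get right is the bookkeeping of the alternating signs, so that the wrap-around equation really does collapse to $2v_1 \equiv C$ with $C$ of the correct parity; once that is in place, both directions follow immediately. As a cross-check, one can argue by counting instead: $\ker D = \{\mathbf{0}, (\tfrac{m}{2}, \dots, \tfrac{m}{2})\}$ has order $2$, so $|D(\mathbb{Z}_m^n)| = m^n/2$, which equals the number of tuples with even coordinate sum, and the necessity direction already supplies one containment.
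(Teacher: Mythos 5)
Your necessity argument is identical to the paper's: sum the $n$ congruences so each $v_i$ appears twice, and use that $m$ is even. For sufficiency, however, you take a genuinely different and more constructive route. The paper never solves the system: it counts the tuples with even coordinate sum (getting $m^n/2$ via a binomial identity), invokes Theorem \ref{n odd m even pred} to say every tuple with a predecessor has exactly two, and derives a contradiction from the pigeonhole bound $\left(\frac{m^n}{2}-1\right)\times 2 < m^n$ if some even-sum tuple lacked a predecessor. You instead eliminate $v_2,\dots,v_n$ in terms of $v_1$ and reduce the wrap-around equation to $2v_1 \equiv C \pmod m$ with $C \equiv x_1+\cdots+x_n \pmod 2$; I checked the sign bookkeeping and it works, with the hypothesis that $n$ is odd entering exactly where it must (it makes $(-1)^{n-1}=+1$, so $v_n = C' + v_1$ rather than $C'-v_1$ and the congruence in $v_1$ is $2v_1\equiv C$ rather than a vacuous one). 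Your approach buys an explicit predecessor and re-derives the ``exactly two predecessors'' count of Theorem \ref{n odd m even pred} as a byproduct, rather than consuming that theorem as an input; the paper's approach avoids all sign bookkeeping at the cost of being nonconstructive and dependent on the earlier theorem. Your closing cross-check via $|\ker D|=2$ is essentially a streamlined version of the paper's counting argument. The only small thing to tighten in a final write-up is to actually display the alternating sum $C = x_1 - x_2 + \cdots + x_n$ and the verification that $(-1)^{n-1}=1$, since that is the one place the parity of $n$ is load-bearing.
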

Notice that in our transition graph from $\mathbb{Z}_4^3$ in Figure \ref{Example}, all of the tuples that have a predecessor satisfy this condition. We would also like to note that for the case where $m$ is prime, this theorem follows from Lemma 4 of \cite{Breuer1}.
\begin{proof}[Proof of Theorem \ref{whenhaspred}]
     Suppose that $(x_1, x_2, ..., x_n)$ has a predecessor $(y_1, y_2,...,y_n)$. Then we have 
\[y_1+y_2\equiv x_1 \; \text{mod} \; m\]
\[y_2+y_3 \equiv x_2 \; \text{mod} \; m\]
\[\vdots\]
\[y_n+y_1 \equiv x_n \; \text{mod} \; m.\]
Adding all of these equations together produces 
\begin{equation}\label{Equation_evenpred_iseven}
2y_1+2y_2+\cdots +2y_n \equiv x_1+x_2+ \cdots + x_n \; \text{mod} \; m.
\end{equation}
Because the left side of (\ref{Equation_evenpred_iseven}) is even and $m$ is even, this forces $x_1+x_2+ \cdots + x_n$ to be even.

\indent It now suffices to show that if a tuple satisfies $x_1+x_2+\cdots + x_n$ is even, then it has a predecessor. We start by counting how many tuples $(x_1, x_2, ..., x_n)$ in $\mathbb{Z}_m^n$ satisfy $x_1+x_2+ \cdots + x_n$ even. The types of tuples that satisfy this are
\begin{itemize}
    \item $x_1, x_2, ..., x_n$ are all even. There are $(\displaystyle{\frac{m}{2})^n}$ such tuples\\
    \item 2 of the $x_i$ are odd and the rest are even. There are $(\displaystyle{\frac{m}{2})^n \times \binom{n}{2}}$ such tuples\\
    \item 4 of the $x_i$ are odd and the rest are even. There are $(\displaystyle{\frac{m}{2})^n \times \binom{n}{4}}$ such tuples\\
    \vdots\\
    \item 1 of the $x_i$ are even and the other $n-1$ are odd. There are $(\displaystyle{\frac{m}{2})^n \times \binom{n}{n-1}}$ such tuples
\end{itemize}
All together, the number of tuples whose entries sum to an even number is

\[\sum_{k=0}^{\frac{n-1}{2}}(\frac{m}{2})^n \binom{n}{2k}=(\frac{m}{2})^n\sum_{k=0}^{\frac{n-1}{2}} \binom{n}{2k}. \]
It is known that $\displaystyle{\sum_{k=0}^{\frac{n-1}{2}} \binom{n}{2k}}=2^{n-1}$, with Identity 129 of \cite{Benjamin} providing a proof. Therefore, this is
\[(\frac{m}{2})^n \times 2^{n-1}\]
or
\[\frac{m^n}{2}.\]

\indent Since the only tuples that have predecessors are of the form $x_1+x_2+ \cdots + x_n$ even, suppose there exists $(x_1, x_2, ..., x_n) \in \mathbb{Z}_m^n$ such that $x_1+x_2+ \cdots + x_n$ is even and $(x_1, x_2, ..., x_n)$ does not have a predecessor. Then there are at most $(\displaystyle{\frac{m^n}{2})-1}$ tuples that have predecessors. Every tuple that has a predecessor has exactly 2 predecessors by Lemma \ref{n odd m even pred}, so if we use this to count all of the tuples in $\mathbb{Z}_m^n$, there are at most $(\displaystyle{(\frac{m^n}{2})-1)} \times 2<m^n$ tuples in $\mathbb{Z}_m^n$. This contradicts the fact that there are $m^n$ tuples in $\mathbb{Z}_m^n$.


\end{proof}


As for the case where $n,m$ are both odd, if you examine a Ducci sequence, you will notice that every tuple is in a Ducci cycle. Therefore, $L_m(n)=0$, which we can prove is always true:
\begin{theorem}\label{n_odd_Length_0}
For $n$ odd and $m$ odd, $L_m(n)=0$.
\end{theorem}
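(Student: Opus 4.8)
The plan is to show that when both $n$ and $m$ are odd, the Ducci function $D$ is a bijection on $\mathbb{Z}_m^n$; once this is established, the conclusion $L_m(n)=0$ is immediate. Indeed, since $\mathbb{Z}_m^n$ is finite, a bijection has the property that every element is a periodic point: if $D^i(\mathbf{u})=D^j(\mathbf{u})$ with $i<j$, then applying $D^{-i}$ gives $\mathbf{u}=D^{j-i}(\mathbf{u})$, so $\mathbf{u}$ lies in its own Ducci cycle with length $0$. Applying this to $\mathbf{u}=(0,0,\dots,0,1)$ yields $L_m(n)=Len(0,0,\dots,0,1)=0$.

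Because $D=I+H$ is an endomorphism of $\mathbb{Z}_m^n$ and the domain and codomain are finite of equal size, it suffices to prove that $D$ is injective, i.e. that $\ker D=\{\mathbf{0}\}$. So I would take $\mathbf{w}=(w_1,w_2,\dots,w_n)$ with $D(\mathbf{w})=\mathbf{0}$ and unpack the defining equations
\[w_1+w_2\equiv 0,\quad w_2+w_3\equiv 0,\quad \dots,\quad w_n+w_1\equiv 0 \pmod m.\]
The first $n-1$ equations force $w_{i+1}\equiv -w_i$, and hence $w_{i+1}\equiv(-1)^i w_1 \pmod m$ for each $i$. Substituting $w_n\equiv(-1)^{n-1}w_1$ into the final equation $w_n+w_1\equiv 0$ and using that $n$ is odd (so $(-1)^{n-1}=1$) gives $2w_1\equiv 0 \pmod m$.

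At this point the hypothesis that $m$ is odd does the remaining work: since $\gcd(2,m)=1$, the element $2$ is a unit in $\mathbb{Z}_m$, so $2w_1\equiv 0$ forces $w_1\equiv 0$, and then $w_2\equiv\cdots\equiv w_n\equiv 0$ as well. Thus $\ker D=\{\mathbf{0}\}$, $D$ is a bijection, and the argument of the first paragraph finishes the proof.

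The step I expect to be the crux is the interplay of the two parity hypotheses: the oddness of $n$ is exactly what collapses the telescoping chain of sign-alternating relations into the single equation $2w_1\equiv 0$ (for $n$ even one instead obtains $0\equiv 0$ and a genuinely nontrivial kernel), while the oddness of $m$ is what makes $2$ invertible so that this equation has only the trivial solution. Equivalently, one can phrase the whole argument through the circulant determinant $\det D=\prod_{\zeta^n=1}(1+\zeta)=2$ for $n$ odd, which reduces to a unit in $\mathbb{Z}_m$ precisely when $m$ is odd; I prefer the direct kernel computation, since it avoids invoking determinants and eigenvalues over $\mathbb{Z}_m$ and matches the elementary style used in Theorem \ref{n odd m even pred}.
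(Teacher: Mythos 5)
Your proof is correct, but it takes a genuinely different route from the paper. The paper argues by exhibiting one explicit predecessor of the basic tuple: it checks that $D\bigl(\frac{m+1}{2},\frac{m-1}{2},\dots,\frac{m-1}{2},\frac{m+1}{2}\bigr)=(0,0,\dots,0,1)$ and then observes that if $(0,\dots,0,1)$ were not in $K(\mathbb{Z}_m^n)$, its predecessor would have length $L_m(n)+1$, contradicting the maximality of $L_m(n)=Len(0,\dots,0,1)$. You instead prove the stronger statement that $D$ is an automorphism of $\mathbb{Z}_m^n$ by computing $\ker D$ directly, and your kernel computation is sound: the chain $w_{i+1}\equiv(-1)^iw_1$ together with $n$ odd collapses the last relation to $2w_1\equiv 0$, and oddness of $m$ makes $2$ a unit. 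What your approach buys is more than the theorem asks for: bijectivity immediately gives $Len(\mathbf{u})=0$ for \emph{every} tuple, hence $K(\mathbb{Z}_m^n)=\mathbb{Z}_m^n$ and the uniqueness of predecessors — facts the paper states only afterwards (crediting Proposition 6.1 of Dular for the first). What the paper's argument buys is economy: it needs only the single identity above plus the already-quoted inequality $Len(\mathbf{u})\le L_m(n)$, with no linear algebra. Your parenthetical determinant remark $\det D=\prod_{\zeta^n=1}(1+\zeta)=2$ is also correct and is a nice one-line summary of where both parity hypotheses enter, though as you say the direct kernel computation is closer in spirit to the paper's elementary style.
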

\begin{proof}
For a given $n,m$, if $\mathbf{u}, \mathbf{v} \in \mathbb{Z}_m^n$ and $\mathbf{u}$ is a predecessor to $\mathbf{v}$, then either $\mathbf{u} \not \in K(\mathbb{Z}_m^n)$ and $Len(\mathbf{u})=Len(\mathbf{v})+1$, or $\mathbf{u} \in K(\mathbb{Z}_m^n)$ and $Len(\mathbf{u})=Len(\mathbf{v})=0$.

\indent Now let $n,m$ be odd and $\mathbf{u}=(\displaystyle{\frac{m+1}{2},\frac{m-1}{2},\frac{m+1}{2},..., \frac{m-1}{2}, \frac{m+1}{2}})$.
Then $D(\mathbf{u})= (0,0,...,0,1)$. 
If $(0,0,..., 0,1) \not \in K(\mathbb{Z}_m^n)$, then $\mathbf{u} \not \in K(\mathbb{Z}_m^n)$, which implies $Len(\mathbf{u})>L_m(n)$. But this contradicts $L_m(n) \geq Len(\mathbf{u})$ for every $\mathbf{u} \in \mathbb{Z}_m^n$. Therefore, $(0,0,...,0,1) \in K(\mathbb{Z}_m^n)$ and $L_m(n)=0$.
\end{proof}

Note that this is the case in Theorem \ref{n_odd_length} where $l=0$. It also means that every tuple is in a cycle and $K(\mathbb{Z}_m^n)=\mathbb{Z}_m^n$, which is also proved in Proposition 6.1 of \cite{Dular}. Additionally, every tuple in $\mathbb{Z}_m^n$ has exactly one predecessor.

\indent It would be very useful to be able to know what $D^r(\mathbf{u})$ looks like given a tuple $\mathbf{u} \in \mathbb{Z}_m^n$ for $r \geq 0$. For a tuple $\mathbf{u}=(x_1, x_2, ..., x_n) \in \mathbb{Z}_m^n$, the first few tuples in its Ducci sequence are
 
 \textbf{\[(x_1, x_2, ..., x_n)\]
     \[(x_1+x_2, x_2+x_3, ..., x_n+x_1)\]
     \[(x_1+2x_2+x_3, x_2+2x_3+x_4, ..., x_n+2x_1+x_2\]
     \[(x_1+3x_2+3x_3+x_4, x_2+3x_3+3x_4+x_5, ..., x_n+3x_1+3x_2+x_3)\]
     \[\vdots\]  }
    Notice that the coefficients on each of the $x_i$ for a given coordinate recur in other coordinates of the tuple. If we say that the coefficient on $x_s$ in the first coordinate of $D^r(\mathbf{u})$ is $a_{r,s}$ for $r \geq 0$, then $a_{r,s}$ is also the coefficient on $x_{s-i+1}$ in the $i$th coordinate of $D^r(\mathbf{u})$, which is shown on page 6 of \cite{Paper1}. It will also be useful to use the fact that $D^r(0,0,...,0,1)=(a_{r,n}, a_{r,n-1}, ..., a_{r,1})$ throughout the rest of the paper.
    
    \indent There is an additional property of the $a_{r,s}$ coefficients that will be useful to us:
    \begin{lemma} \label{coefficients_sum_to_power_of_2}
    \[\sum_{i=1}^n a_{r,i}=2^r.\]
\end{lemma}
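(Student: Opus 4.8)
The plan is to exploit the decomposition $D = I + H$ recorded in Section~\ref{Background}, together with the shift structure of the coefficients $a_{r,s}$, and to run an induction on $r$. The point is that the total coefficient mass doubles at each application of $D$, which already smells like $2^r$.

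First I would pin down the recurrence governing the $a_{r,s}$. Since $D^{r+1}(\mathbf{u}) = D(D^r(\mathbf{u}))$, the first coordinate of $D^{r+1}(\mathbf{u})$ is the sum of the first and second coordinates of $D^r(\mathbf{u})$. By the shift property quoted in the excerpt (the coefficient of $x_s$ in the $i$th coordinate of $D^r(\mathbf{u})$ is $a_{r,s+i-1}$, indices read modulo $n$), the coefficient of $x_s$ in the first coordinate is $a_{r,s}$ and in the second coordinate is $a_{r,s+1}$. Comparing coefficients of $x_s$ on both sides gives
\[a_{r+1,s} = a_{r,s} + a_{r,s+1},\]
with subscripts taken modulo $n$.

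Summing this recurrence over $s = 1, \ldots, n$ and reindexing the second sum (as $s$ runs over a complete residue system mod $n$, so does $s+1$) yields
\[\sum_{s=1}^n a_{r+1,s} = \sum_{s=1}^n a_{r,s} + \sum_{s=1}^n a_{r,s+1} = 2\sum_{s=1}^n a_{r,s}.\]
With the base case $D^0 = I$, for which $a_{0,1} = 1$ and $a_{0,s} = 0$ otherwise so that the sum is $2^0 = 1$, an immediate induction delivers $\sum_{s=1}^n a_{r,s} = 2^r$.

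As an alternative I could argue in one shot: expanding $D^r = (I+H)^r = \sum_{k=0}^r \binom{r}{k} H^k$ and observing that each power $H^k$ merely permutes coordinates and hence contributes coefficient mass exactly $\binom{r}{k}$, one gets $\sum_{s=1}^n a_{r,s} = \sum_{k=0}^r \binom{r}{k} = 2^r$. In either route the only genuine bookkeeping is the cyclic (mod $n$) index wraparound, which must be handled so that the reindexing of the summation is valid; once the recurrence $a_{r+1,s} = a_{r,s} + a_{r,s+1}$ is correctly established, there is no real obstacle and the conclusion follows at once.
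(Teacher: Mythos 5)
Your proof is correct and is essentially the paper's own argument: induction on $r$, using the coordinate recurrence and the fact that summing over a complete residue system mod $n$ turns the recurrence into a doubling of the total coefficient mass. The only quibble is the direction of the shift in your recurrence --- direct computation (e.g.\ $a_{2,2}=2=a_{1,2}+a_{1,1}$) gives $a_{r+1,s}=a_{r,s}+a_{r,s-1}$ rather than $a_{r,s}+a_{r,s+1}$ --- but this is immaterial for the lemma, since the sum over all $s$ modulo $n$ is invariant under either reindexing.
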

\begin{proof}
    We prove this by induction
    
    \textbf{Basis Step} $\mathbf{r=0}$: Follows from the fact that $a_{0,1}=1$ and $a_{0,s}=0$ when $1<s \leq n$.
    
    \textbf{Inductive Step:} Assume that $\displaystyle{\sum_{i=1}^n a_{r-1,i}}=2^{r-1}$. Notice $\displaystyle{\sum_{i=1}^n a_{r,s}}$ is
    \[\sum_{i=1}^n a_{r-1,i}+\sum_{i=1}^n a_{r-1,i-1}\, .\]
    Because the $s$ coordinates in $a_{r,s}$ reduces modulo $n$, this is
    \[\sum_{i=1}^n a_{r-1,s}+\sum_{i=1}^n a_{r-1, s} \, .\]
    Since each of these sums is $2^{r-1}$, we conclude that $\displaystyle{\sum_{i=1}^n a_{r,i}}=2^r$.
  
\end{proof}   

\section{Proving $L_m(n)=l$}\label{MainFinding}
\indent We can now begin proving some of our theorems regarding $L_m(n)$ and $K(\mathbb{Z}_m^n)$ when $n$ is odd. To begin, we prove the following lemma:

\begin{lemma}\label{coefficient_at_period}
Let $n$ be odd and $m=2^lm_1$, where $m_1$ is odd and $l\geq 1$. Let $d$ be a number such that $d>L_m(n)$ and $P_m(n)|d$. Then there exists $z \in  \mathbb{Z}^+$ odd such that
\[a_{d,s} \equiv
    \begin{cases}
    zm_1+1 \; \text{mod} \; m & s=1\\
    zm_1  \; \text{mod} \; m & 1 <s \leq n
    \end{cases}.\]

 \end{lemma}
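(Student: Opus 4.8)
The plan is to translate the statement into the group ring $R=\mathbb{Z}_m[x]/(x^n-1)$. Identifying $\mathbb{Z}_m^n$ with $R$ as a free rank-one module on the generator $(0,\ldots,0,1)$ (so that $H^k(0,\ldots,0,1)=e_{n-k}\mapsto x^k$), the shift $H$ becomes multiplication by $x$ and $D=I+H$ becomes multiplication by $u=1+x$. Under this identification $D^d(0,\ldots,0,1)\mapsto u^d=(1+x)^d$, the all-ones tuple $(1,\ldots,1)$ maps to $s=1+x+\cdots+x^{n-1}$, and the target tuple $(zm_1,\ldots,zm_1,zm_1+1)$ maps to $1+zm_1 s$. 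Since $D^r(0,\ldots,0,1)=(a_{r,n},\ldots,a_{r,1})$, proving the lemma is exactly the same as producing an odd $z$ with $u^d=1+zm_1 s$ in $R$.

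I would then split by the Chinese Remainder Theorem $\mathbb{Z}_m\cong\mathbb{Z}_{2^l}\times\mathbb{Z}_{m_1}$, treating the two factors separately, and using throughout that reduction is $D$-equivariant, so $P_{2^l}(n)$ and $P_{m_1}(n)$ both divide $P_m(n)$ (and hence $d$). Modulo $m_1$ the work is done by Theorem \ref{n_odd_Length_0}: since $n,m_1$ are odd, $(0,\ldots,0,1)$ already lies in its cycle and $P_{m_1}(n)\mid d$, so $D^d(0,\ldots,0,1)\equiv(0,\ldots,0,1)\pmod{m_1}$; this says $a_{d,1}\equiv 1$ and $a_{d,s}\equiv 0\pmod{m_1}$ for $s>1$, which is the $\bmod\,m_1$ shadow of the claimed formula for every integer $z$.

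The real content is the factor $\mathbb{Z}_{2^l}$, and here I would use that $x^n-1$ is separable mod $2$ (its derivative $nx^{n-1}$ is coprime to it because $n$ is odd). Hensel-lifting the distinct irreducible factors gives $\mathbb{Z}_{2^l}[x]/(x^n-1)\cong R_0\times R'$, where $R_0=\mathbb{Z}_{2^l}$ is the $x=1$ component and $R'$ collects the rest. On $R_0$ we have $u=2$, while on every component of $R'$ the residue of $x$ is $\neq 1$, so $u=1+x\not\equiv 0$ in characteristic $2$ and $u$ is a unit. Consequently the orbit of $1$ is purely periodic on $R'$ with period $\operatorname{ord}_{R'}(u)$, and on $R_0$ it runs $1,2,\ldots,2^{l-1},0,\ldots$; thus $P_{2^l}(n)=\operatorname{ord}_{R'}(u)$ (and incidentally $L_{2^l}(n)=l$). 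Because $P_{2^l}(n)\mid d$, I get $u^d=1$ on $R'$ and $u^d=2^d$ on $R_0$, while $1+c\,s$ equals $1+cn$ on $R_0$ and $1$ on $R'$ (as $s=n$ on the $x=1$ component and $s=0$ elsewhere). Matching forces $cn\equiv 2^d-1\pmod{2^l}$; since $d\geq 1$ the right side is odd, so $c:=n^{-1}(2^d-1)$ is odd, and $D^d(0,\ldots,0,1)\equiv(0,\ldots,0,1)+c(1,\ldots,1)\pmod{2^l}$. Finally I choose the positive odd integer $z$ with $zm_1\equiv c\pmod{2^l}$ (possible since $m_1$ is an odd unit), and CRT combines the two shadows into $a_{d,1}\equiv zm_1+1$ and $a_{d,s}\equiv zm_1\pmod m$ for $s>1$.

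I expect the main obstacle to be the $2$-adic bookkeeping in the third step: justifying the separable splitting $R_0\times R'$ cleanly, verifying that $R_0$ is the unique component on which $u$ is not a unit, and converting ``$P_{2^l}(n)\mid d$'' into ``$u^d=1$ on $R'$.'' If one prefers to avoid the ring decomposition entirely, the same mod-$2^l$ conclusion can be reached more combinatorially by analysing $a_{d,s}=\sum_{k\equiv s-1\,(n)}\binom{d}{k}$ directly, but keeping track of the constant $c$ and its parity is cleanest through the idempotent $n^{-1}s$ used above.
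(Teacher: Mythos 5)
Your proposal is correct, but it takes a genuinely different route from the paper. The paper proves the lemma by induction on $l$: the base case $l=1$ combines the mod-$m_1$ information coming from $L_{m_1}(n)=0$ with a parity argument obtained by comparing the two known predecessors of $(0,0,\dots,0,1)$, namely $\left(\frac{m_1+1}{2},\frac{m_1-1}{2},\dots,\frac{m_1+1}{2}\right)$ and $(a_{d-1,n},\dots,a_{d-1,1})$, which forces $a_{d,s}=a_{d-1,s}+a_{d-1,s-1}$ to be odd for $s>1$ and even for $s=1$; the inductive step bootstraps from modulus $2^{l-1}m_1$ to $2^lm_1$ using the convolution identity $a_{2d,s}=\sum_{i=1}^n a_{d,i}a_{d,s-i+1}$ together with $a_{2d,s}\equiv a_{d,s}\bmod m$, producing the explicit recursion $z=nz'^2m_1+2z'$. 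You instead work in $\mathbb{Z}_m[x]/(x^n-1)$, split off the $x=1$ component by CRT and Hensel lifting (valid since $x^n-1$ is separable mod $2$ for $n$ odd), and read off the answer in closed form, $c\equiv n^{-1}(2^d-1)\bmod 2^l$. Your argument is sound as sketched: the key points that need care --- that each non-trivial Hensel factor $\mathbb{Z}_{2^l}[x]/(F_i)$ is local with residue field $\mathbb{F}_2[x]/(f_i)$ so that $1+x$ is a unit there, that $s=1+x+\cdots+x^{n-1}$ vanishes on those factors, and that $P_{2^l}(n)\mid P_m(n)\mid d$ gives $u^d=1$ on $R'$ --- all check out, and you only need $d\geq 1$ to get the oddness of $2^d-1$, which the hypothesis $d>L_m(n)\geq 0$ supplies. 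The trade-off: the paper's proof is elementary and self-contained modulo its earlier convolution formula, at the cost of a fairly heavy computation; yours requires setting up the idempotent decomposition over $\mathbb{Z}/2^l\mathbb{Z}$ carefully, but in exchange it gives a closed form for $z$, explains structurally why the ``all-ones'' direction is the only deviation from $(0,\dots,0,1)$, and delivers $L_{2^l}(n)=l$ and $P_{2^l}(n)=\operatorname{ord}_{R'}(1+x)$ as free by-products, which would shorten the proofs of Theorems \ref{n_odd_length} and \ref{n_odd_Cycle_subgroup} as well.
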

 \begin{proof}
 We prove this via induction
 
   \textbf{Basis step} $\mathbf{l=1}$: Assume $m_1$ odd and $P_m(n)|d$. We first set out to prove that 
    \[a_{d,s} \equiv
    \begin{cases}
    m_1+1 \; \text{mod} \; 2m_1 & s=1\\
    m_1 \; \text{mod} \; 2m_1 & 1 <s \leq n
    \end{cases}.\]
    
   
   \indent By Propostion 3.1 in \cite{Dular}, if $m^*|m$, then $P_{m^*}(n)|P_m(n)$. Therefore, $P_{m_1}(n)|d$.
    
    \indent Since $L_{m_1}(n)=0$, we know that 
    \[a_{d,s} \equiv
    \begin{cases}
     1 \; \text{mod} \; m_1 & s=1\\
     0 \; \text{mod} \; m_1 & 1 <s \leq n
    \end{cases}.\]
    So for $a_{d,s}$, we have that either $a_{d,s} \equiv m_1 \; \text{mod} \; 2m_1$ or $a_{d,s} \equiv 0 \; \text{mod} \; 2m_1$. 
    We also know that $(0,0,...,0,1)$ has $1$ predecessor in $\mathbb{Z}_{m_1}^n$. Since 
    \[D(\frac{m_1+1}{2}, \frac{m_1-1}{2}, \frac{m_1+1}{2}, ..., \frac{m_1-1}{2}, \frac{m_1+1}{2})=(0,0,...,0,1),\] 
    we have that $\displaystyle{(\frac{m_1+1}{2}, \frac{m_1-1}{2}, \frac{m_1+1}{2}, ..., \frac{m_1-1}{2}, \frac{m_1+1}{2})}$ is the predecessor of $(0,0,...,0,1)$. 
    However, another predecessor of$(0,0,...,0,1)$ is 
    \[(a_{d-1,n},a_{d-1,n-1},...,a_{d-1,1}),\]  so  $a_{d-1,i} \equiv \displaystyle{\frac{m_1 + 1}{2}} \; \text{mod} \; m_1$
    when $i$ is odd and $a_{d-1,i} \equiv \displaystyle{\frac{m_1 - 1}{2}} \; \text{mod} \; m_1$ when $i$ is even.
    
   Therefore, for every $i \neq 1$, $a_{d-1,i}$ is odd and $a_{d-1,i-1}$ is even or the parities are switched. Either way, this tells us that $a_{d,s}=a_{d-1,s}+a_{d-1,s-1}$ must be odd for every $1<s \leq n$. Therefore, we must be in the case where $a_{d,s} \equiv m_1 \; \text{mod} \; 2m_1$.

    \indent Since $a_{d,1}=a_{d-1,1}+a_{d-1,n}$ and the parities of $a_{d-1,1}$ and $a_{d-1,n}$ will be the same, $a_{d,1}$ must be even.
    Since $a_{d,1} \equiv 1 \; \text{mod} \; 2m_1$ or $a_{d,1} \equiv m_1+1 \; \text{mod} \; 2m_1$, we conclude that $a_{d,1} \equiv m_1+1 \; \text{mod} \; 2m_1$.
    
    \textbf{Inductive Step:} Let $n$ be odd and $m=2^lm_1$ where $m_1$ odd. We may assume $l>1$ since we have proved the $l=1$ case. Assume $P_m(n)|d$ and $d>L_m(n)$.  Assume that there exists $z' \in \mathbb{Z}$ odd such that 
\[a_{d,s} \equiv 
\begin{cases}
z'm_1+1 \; \text{mod} \; 2^{l-1}m_1 & s=1\\
z'm_1 \; \text{mod} \; 2^{l-1}m_1  & 1 <s \leq n
\end{cases}.\]
Then we can take $t_s \in \mathbb{Z}^+$ such that $a_{d,s}=z'm_1+2^{l-1}m_1t_s$ for $1 <s \leq n$ and $t_1$ such that $a_{d,1}=z'm_1+1+2^{l-1}m_1t_1$.
By how $d$ was defined, $a_{2d,s} \equiv a_{d,s} \; \text{mod} \; m$. Theorem 5 from \cite{Paper1} tells us $a_{r,s}=\displaystyle{\sum_{i=1}^n a_{j,i}a_{r-j,s-i+1}}$, which we can use to compute $a_{2d,s}$ where $1<s \leq n$, and see that
\[a_{2d,s}=\sum_{i=1}^na_{d,i}a_{d,s-i+1}.\]
We now separate out the terms where $i,s-i+1=1$ to obtain
\[a_{d,1}a_{d,s}+a_{d,s}a_{d,1}+\sum_{\substack{i=2\\i \neq s}}^n a_{d,i}a_{d,s-i+1}\]
so we can write the $a_{d,s}$ in terms of $z',m_1,$ and $t_s$, which will be
\[2(z'm_1+1+2^{l-1}m_1t_1)(z'm_1+2^{l-1}m_1t_s)+\sum_{\substack{i=2\\i\neq s}}^n(z'm_1+2^{l-1}m_1t_i)(z'm_1+2^{l-1}m_1t_{s-i+1}).\]
Next, we multiply everything out:
\[2z'^2m_1^2+2^lz'm_1^2j_s+2z'm_1+2^lm_1t_s+2^lk'm_1^2j_1+2^{2l-1}m_1^2t_1t_s\]\[+\sum_{\substack{i=2\\i \neq s}}^n (z'^2m_1^2+2^{l-1}z'm_1^2t_{s-i+1}+2^{l-1}t_iz'm_1^2+2^{2l-2}m_1^2t_it_{s-i+1}).\]
Simplifying and reducing modulo $m$, this is equivalent to
\[ nz'^2m_1^2+2z'm_1+2^{l-1}z'm_1^2(2*\sum_{\substack{i=2\\ i \neq s}}^n t_i) \; \text{mod} \; m\]
or
\[m_1(nz'^2m_1+2z') \; \text{mod} \; m.\]
Take $z=nz'^2m_1+2z'$ and we have found a $z$ odd satisfying 
\[a_{d,s} \equiv zm_1 \; \text{mod} \; m.\]
Now $a_{2d,1} \equiv a_{d,1} \; \text{mod} \; m$ so we calculate $a_{2d,1}$:
\[a_{2d,1}=\sum_{i=1}^n a_{d,i}a_{2-i}.\]
Similar to before, we separate out the terms where $i, 2-i=1$ to see this is
\[a_{d,1}a_{d,1}+\sum_{i=2}^n a_{d,i}a_{d,2-i}\]
so we can plug in our other variables and have
\[(z'm_1+1+2^{l-1}m_1t_1)(z'm_1+1+2^{l-1}m_1t_1)+\sum_{i=2}^n(z'm_1+2^{l-1}m_1t_i)(z'm_1+2^{l-1}m_1t_{2-i}).\]
Expanding this results in
\[z'^2m_1^2+z'm_1+2^{l-1}z'm_1^2t_1+z'm_1+1+2^{l-1}m_1t_1+2^{l-1}z'm_1^2t_1+2^{l-1}m_1t_1+2^{2l-2}m_1t_1^2\]\[+\sum_{i=2}^n (z'^2m_1^2+2^{l-1}z'm_1^2t_{2-i}+2^{l-1}t_iz'm_1^2+2^{2l-2}m_1^2t_it_{2-i}),\]
which can simplify to 
\[ nz'^2m_1^2+2z'm_1+2^lm_1t_1+2^{l-1}z'm_1^2(2*\sum_{i=1}^n t_i)+1 \; \text{mod} \; m\]
or
\[ m_1(nz'^2m_1+2z')+1 \; \text{mod} \; m.\]
Notice that by how we defined $z$, this gives us $a_{d,1} \equiv zm_1+1 \; \text{mod} \; m$ and the lemma follows.
 \end{proof}
We now have the tools we will be using to prove our main theorem about the length when $n$ is odd:
 \begin{proof}[Proof of Theorem \ref{n_odd_length}]
 To prove $L_m(n)=l$, it suffices to show 
 \[D^l(0,0,...,0,1) \in K(\mathbb{Z}_m^n)\]
  and 
  \[D^{l-1}(0,0,...,0,1) \not \in K(\mathbb{Z}_m^n),\] or in other words,
 that $(a_{l,n}, a_{l,n-1}, ..., a_{l,1}) \in K(\mathbb{Z}_m^n)$ and $(a_{l-1,n}, a_{l-1,n-1}, ..., a_{l-1,1})$ is not. Because 
 \[(a_{l-1,n}, a_{l-1,n-1}, ..., a_{l-1,1})\] and 
 \[(2^{l-1}m_1+a_{l-1,n}, 2^{l-1}m_1+a_{l-1,n-1}, ..., 2^{l-1}m_1+a_{l-1,1})\] are both predecessors to $(a_{l,n}, a_{l,n-1}, ..., a_{l,1})$ by Lemma \ref{n odd m even pred}, showing that  
 \[(2^{l-1}m_1+a_{l-1,n}, 2^{l-1}m_1+a_{l-1,n-1}, ..., 2^{l-1}m_1+a_{l-1,1}) \in K(\mathbb{Z}_m^n)\]
  will mean that $(a_{l,n}, a_{l,n-1}, ..., a_{l,1}) \in K(\mathbb{Z}_m^n)$. Since $(a_{l,n}, a_{l,n-1}, ..., a_{l,1})$ cannot have more than one predecessor in $K(\mathbb{Z}_m^n)$, $(a_{l-1,n}, a_{l-1,n-1}, ..., a_{l-1,1})$ would not be in $K(\mathbb{Z}_m^n)$. Therefore, it suffices to show 
  \[(2^{l-1}m_1+a_{l-1,n}, 2^{l-1}m_1+a_{l-1,n-1}, ..., 2^{l-1}m_1+a_{l-1,1}) \in K(\mathbb{Z}_m^n).\] 
 
    \indent Take $d$ such that $P_m(n)|d$ and $d>L_m(n)$. Notice that if $1 \leq s \leq n$, then the $s$th entry of $D^d(2^{l-1}m_1+a_{l-1,n}, 2^{l-1}m_1+a_{l-1,n-1}, ..., 2^{l-1}m_1+a_{l-1,1})$ is 
    \[(2^{l-1}m_1+a_{l-1,n-s+1})a_{d,1}+(2^{l-1}m_1+a_{l-1,n-s})a_{d,2}+ \cdots+ (2^{l-1}m_1+a_{l-1,n-s+2})a_{d,n},\]
    which we aim to show is congruent to $2^{l-1}m_1+a_{l-1,n-s+1} \; \text{mod} \; m$. By Lemma \ref{coefficient_at_period}, there exists $z$ such that $a_{d,s} \equiv zm_1 \; \text{mod} \; m$ for $1 <s \leq n$ and $a_{d,1} \equiv zm_1+1 \; \text{mod} \; m$. So the $s$th entry is congruent to
    \[ (2^{l-1}m_1+a_{l-1,n-s+1})(zm_1+1)+(2^{l-1}m_1+a_{l-1,n-s})zm_1+ \cdots\]\[+ (2^{l-1}m_1+a_{l-1,n-s+2})zm_1 \; \text{mod} \; m.\]
    Expanding and reordering gives
    \[ 2^{l-1}m_1+a_{l-1,n-s+1}+zm_1\sum_{i=1}^n (2^{l-1}m_1+a_{l-1,i}) \; \text{mod} \; m.\]
    Focusing on the sum gives us
    \[2^{l-1}m_1+a_{l-1,n-s+1}+2^{l-1}zm_1^2n+km_1\sum_{i=1}^na_{l-1,i} \; \text{mod} \; m.\]
    Using Lemma \ref{coefficients_sum_to_power_of_2}, we have
    \[ 2^{l-1}m_1+a_{l-1,n-s+1}+2^{l-1}zm_1^2n+2^{l-1}km_1 \; \text{mod} \; m.\]
    Now we can pull out some common terms to obtain
    \[ 2^{l-1}m_1+a_{l-1,n-s+1} +2^{l-1}m_1z(m_1n+1)\; \text{mod} \; m.\]
    Because $m_1, n$ are odd, $m_1n+1$ is even, so this is
    \[ 2^{l-1}m_1+a_{l-1,n-s+1} \; \text{mod} \; m.\]
   It follows that $(2^{l-1}m_1+a_{l-1,n}, 2^{l-1}m_1+a_{l-1,n-1}, ..., 2^{l-1}m_1+a_{l-1,1}) \in K(\mathbb{Z}_m^n)$.
 \end{proof}
To prove our second main theorem, we first prove Lemma \ref{AlltheWayOut}:
\begin{proof}[Proof of Lemma \ref{AlltheWayOut}]
Let $n$ be odd and $m=2^lm_1$ where $m_1$ is odd and $l \geq 1$.

      \indent Note $L_m(n)=l$ by Theorem \ref{n_odd_length}. 
        Suppose there exists $\mathbf{u}=(x_1, x_2, ..., x_n)$ such that $x_1+x_2+ \cdots +x_n$ is odd and $Len(\mathbf{u})<l$. Then $D^{l-1}(\mathbf{u}) \in K(\mathbb{Z}_m^n)$. Choose $d$ such that $P_m(n)|d$. Then $D^{l-1+d}(\mathbf{u})=D^{l-1}(\mathbf{u})$. 
If we isolate the first entry of both sides, these must be equivalent, so
       \[(a_{l-1,1}x_1+a_{l-1,2}x_2+ \cdots+ a_{l-1,n}x_n)a_{d,1}+(a_{l-1,1}x_2+a_{l-1,2}x_3+ \cdots +a_{l-1,n}x_1)a_{d,2}\]\[+\cdots +(a_{l-1,1}x_n+a_{l-1,2}x_1+\cdots +a_{l-1,n}x_{m-1})a_{d,n} \]\[\equiv (a_{l-1,1}x_1+a_{l-1,2}x_2+ \cdots + a_{l-1,n}x_n) \; \text{mod} \; m.\]
       Using $a_{d,1} \equiv zm_1+1 \; \text{mod} \; m$ and $a_{d,s} \equiv zm_1 \; \text{mod} \; m$ for some $z$ odd, $s \neq 1$, we have 
     \[(a_{l-1,1}x_1+a_{l-1,2}x_2+ \cdots+ a_{l-1,n}x_n)(zm_1+1)+(a_{l-1,1}x_2+a_{l-1,2}x_3+ \cdots +a_{l-1,n}x_1)zm_1\]\[+\cdots +(a_{l-1,1}x_n+a_{l-1,2}x_1+\cdots +a_{l-1,n}x_{m-1})zm_1 \]\[\equiv (a_{l-1,1}x_1+a_{l-1,2}x_2+ \cdots + a_{l-1,n}x_n) \; \text{mod} \; m,\]  
     and so
    \[zm_1(a_{l-1,1}+a_{l-1,2}+ \cdots +a_{l-1,n})(x_1+x_2+ \cdots+ x_n) \equiv 0 \; \text{mod} \; m.\]
    By Lemma \ref{coefficients_sum_to_power_of_2},
    \begin{equation}\label{Equation_alltheway}
    2^{l-1}zm_1(x_1+x_2+ \cdots+x_n) \equiv 0 \; \text{mod} \; m.
    \end{equation}
    where Equivalency (\ref{Equation_alltheway}) follows from Lemma \ref{coefficients_sum_to_power_of_2}. Since $z, m_1,$ and $x_1+x_2+ \cdots +x_n$ are odd, Equivalency (\ref{Equation_alltheway}) is a contradiction. Therefore $Len(\mathbf{u})=l$.
\end{proof}
Finally, we can prove our last theorem of the paper:
 \begin{proof}[Proof of Theorem \ref{n_odd_Cycle_subgroup}]
 Let $n$ be odd and $m=2^lm_1$ where $l \geq 1$ and $m_1$ is odd. Let $G=\{(x_1, x_2, ..., x_n) \in \mathbb{Z}_m^n \; \mid \; x_1+x_2+ \cdots +x_n \equiv 0 \; \text{mod} \; 2^l\}$. 
 
  \indent Because of Theorem \ref{whenhaspred} and Lemma \ref{AlltheWayOut},   all of the tuples whose entries that sum up to be an odd number are on the outside of the connected component of its transition graph containing its Ducci cycle and all of the connected components go out exactly $l$ tuples from the cycle. Since every tuple that has a predecessor has exactly two predecessors, every tuple in the cycle has a total of $2^l$ tuples branching off from it, including itself, but not any other tuples in the cycle. We will use this knowledge about the transition graphs in the rest of the proof.
   
   \indent Let $\mathbf{v} \in K(\mathbb{Z}_m^n)$. Then, there is a tuple $\mathbf{u}=(x_1, x_2, ..., x_n) \in \mathbb{Z}_m^n$ such that $D^l(\mathbf{u})=\mathbf{v}$ and $x_1+x_2+ \cdots+ x_n$ is odd. So if $\mathbf{v}=(y_1, y_2, ..., y_n)$, then 
   \[y_i= a_{l,1}x_i+a_{l,2}x_{i+1}+ \cdots a_{l,n}x_{i-1}.\] Therefore $y_1+y_2+\cdots+y_n$ is
      \[\sum_{i=1}^n (a_{l,1}x_i+a_{l,2}x_{i+1} + \cdots + a_{l,n}x_{i-1})\]
      Expanding this sum and factoring leads us to
      \[(a_{l,1}+a_{l,2}+\cdots +a_{l,n})(x_1+x_2+\cdots+x_n).\]
      Using Lemma \ref{coefficients_sum_to_power_of_2}, this is
      \[2^l(x_1+x_2+\cdots+x_n),\]
     so $y_1+y_2+ \cdots + y_n \equiv 0 \; \text{mod} \; 2^l$.
      Therefore, $\mathbf{v} \in G$. Since $\mathbf{v} \in K(\mathbb{Z}_m^n)$ was arbitrary, $K(\mathbb{Z}_m^n) \leq G$. We now count up the elements in both of these groups. Since every tuple in $K(\mathbb{Z}_m^n)$ has $2^l$ tuples branching off of it, then,
      \[2^l|K(\mathbb{Z}_m^n)|=|\mathbb{Z}_m^n|=2^{nl}m_1^{n}\]
      
      which gives us $|K(\mathbb{Z}_m^n)|=2^{(n-1)l}m_1^{n}$. 
      
      \indent To find $|G|$, take $x_1, x_2, ..., x_{n-1}$ arbitrary. There are then $m_1$ choices for $x_n$ to make it so that $x_1+x_2+\cdots+x_n \equiv 0 \; \text{mod} \; 2^l$. Since $x_1, x_2, ..., x_{n-1}$ was arbitrary, this means that there are $2^{(n-1)l}m_1^{n-1}*m_1=2^{(n-1)l}m_1^{n}$ tuples in $|G|$. Therefore $|K(\mathbb{Z}_m^n)|=|G|$, and $K(\mathbb{Z}_m^n)=G$.
 \end{proof}

\end{document}